\theoremstyle{plain}
\newtheorem{theorem}{Theorem}[section]
\newtheorem{lemma}[theorem]{Lemma}
\newcommand\HG{\mathrm{HG}}
\begin{document}
\title{Hat Guessing Numbers of Degenerate Graphs}
\author{Xiaoyu He\thanks{Department of Mathematics, Stanford University, Stanford, CA 94305.
Email: {\tt alkjash@stanford.edu}. Research supported by a NSF Graduate Research Fellowship grant number DGE-1656518.}
\and Ray Li\thanks{Department of Computer Science, Stanford University, Stanford, CA 94305.  Email: {\tt rayyli@cs.stanford.edu}. Research supported by a NSF Graduate Research Fellowship grant number DGE-1656518.}}
\date{\today}

\maketitle

\begin{abstract}
    Recently, Farnik asked whether the hat guessing number $\HG(G)$ of a graph $G$ could be bounded as a function of its degeneracy $d$, and Bosek, Dudek, Farnik, Grytczuk and Mazur showed that $\HG(G)\ge 2^d$ is possible. We show that for all $d\ge 1$ there exists a $d$-degenerate graph $G$ for which $\HG(G) \ge 2^{2^{d-1}}$. We also give a new general method for obtaining upper bounds on $\HG(G)$.   The question of whether $\HG(G)$ is bounded as a function of $d$ remains open.
\end{abstract}

\section{Introduction}

Hat puzzles have long been a mainstay of recreational mathematics, colorfully touching on ideas as diverse as the axiom of choice, modular arithmetic, and coding theory (see e.g.~\cite{W, W2}). In this note, we study a particular hat puzzle on graphs introduced by Butler, Hajiaghayi, Kleinberg, and Leighton \cite{BHKL}, which has attracted some recent interest in discrete mathematics~\cite{ABST, BDFGM, F, F2, G, GG, S}.

Suppose $n$ players are sitting on the vertices of a (finite, simple) graph $G$. An adversary puts a colored hat on each of their heads, in one of $q$ colors. The players can only see the hats on their neighbors' heads, and in particular no player sees their own hat. The players simultaneously guess the colors of their own hats, and they collectively win if any single one of them guesses correctly. The players may not communicate after the hat colors are assigned but may agree upon a strategy beforehand. The {\it hat guessing number} $\HG(G)$ of $G$ is then the largest $q$ for which the players have a winning strategy in the game with $q$ colors.

To our knowledge, the only connected graphs $G$ for which we know the exact value of $\HG(G)$ are complete graphs, trees, or pseudotrees (connected graphs with exactly one cycle). 
The classic case is when $G=K_n$ is the complete graph on $n$ vertices, and in this case $\HG(K_n)=n$. One winning strategy is for the $i$-th player to guess the hat color (identifying colors with $\mathbb Z/n \mathbb Z$) that would make the total of all the colors sum to $i \pmod{n}$. 
Butler, Hajiaghayi, Kleinberg, and Leighton~\cite{BHKL} determined that $\HG(T)=2$ for all trees $T$, and Szczechla~\cite{S} showed that for the cycle $C_n$ of length $n$, $\HG(C_n) = 3$ if $n=4$ or $n$ is a multiple of $3$, and $\HG(C_n) = 2$ otherwise.
Kokhas and Latyshev \cite{KL} showed that in fact the only connected graphs $G$ with $\HG(G)=2$ are trees and graphs with a single cycle whose length is not 4 or a multiple of 3.

Farnik \cite{F2} observed that if $G$ has maximum degree $\Delta$, then $\HG(G)\le e\Delta$ by the Lov\'asz Local Lemma, and asked whether $\HG(G)$ is also bounded as a function of its degeneracy $d$ (which is always at most $\Delta$). 
Recall that a graph $G$ is $d$-\emph{degenerate} if there is a left-to-right ordering of its vertices where each vertex has at most $d$ neighbors to its left.
Farnik showed that if the players strategies are constrained to have a so-called ``bi-polar'' property, then the players cannot win unless there $q\le d+1$ colors, and conjectured that $\HG(G)\le d+1$ in general.
This was later disproved by Bosek, Dudek, Farnik, Grytczuk, and Mazur \cite{BDFGM}, who showed that for $n$ large enough, $\HG(B_{d,n}) \ge 2^{d}$ for the so-called \emph{book} graph $B_{d,n}$, which consists of a $d$-clique complete to $n$ isolated vertices, and which is $d$-degenerate.\footnote{\cite{BDFGM,F2} refer to $d+1$ as the \emph{coloring number} of the graph.}
Alon, Ben-Eliezer, Shangguan, and T\'amo~\cite{ABST} considered another variant of this question, showing that if $q$ is a prime power and the players' strategies are constrained to be linear (identifying the $q$ hat colors with elements of $\mathbb{F}_q$), then the players cannot win unless $q\le d+1$.

Our first main result proves that in fact $\HG(G)$ can be at least double exponential in terms of the degeneracy $d$ of $G$. 
Furthermore, we determine the hat guessing number exactly for a certain family of $d$-degenerate graphs $G$, adding an infinite family to the small list of graphs whose hat guessing number is known precisely.

Let $T_d(N)$ denote the complete rooted $N$-ary tree with depth $d$, and define $G_d(N)$ to be the graph obtained from $T_d(N)$ by drawing an edge between every vertex and each of its ancestors (if the edge does not exist already).
Ordering the vertices by their depth in $T_d(N)$, we see that $G_d(N)$ is $d$-degenerate.
Recall that \emph{Sylvester's sequence} $(s_n)_{n=0}^\infty$, defined by $s_0=2$ and 
\[
s_n = s_0 \cdots s_{n-1} + 1 = s_{n-1}^2-s_{n-1}+1
\]
for $n\ge 1$, satisfies $s_n = \lfloor E^{2^{n+1}} +\frac{1}{2}\rfloor$ for some $E \approx 1.264$.

\begin{theorem}
\label{thm:main}
For any $d\ge 1$ and all $N$ sufficiently large in terms of $d$,
\[
\HG(G_d(N)) = s_{d}-1.
\]
\end{theorem}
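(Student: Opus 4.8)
The plan is to induct on $d$, exploiting the recursive description $G_d(N)=\{r\}\vee\bigl(N\cdot G_{d-1}(N)\bigr)$: deleting the root $r$ leaves $N$ disjoint copies of $G_{d-1}(N)$ (the subtrees hanging off the children of $r$), and $r$ is adjacent to all of them. The base case $d=1$ is the star $G_1(N)=K_{1,N}$, a tree, so $\HG(G_1(N))=2=s_1-1$. For the inductive step I would write the $s_d-1$ colors as pairs in $\mathbb{Z}_{s_{d-1}}\times[\,s_{d-1}-1\,]$, using the identity $s_d-1=(s_{d-1}-1)\,s_{d-1}$; call the first coordinate the \emph{block} and the second the \emph{inner} color. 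The division of labor is that the $N$ copies of $G_{d-1}(N)$ collectively handle the inner coordinate (via the induction hypothesis, with the color of $r$ available to every copy as advice), while the apex $r$ handles the block coordinate using its global view of all the copies.

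For the lower bound I would build a winning strategy as follows. Each copy runs a translated version of the inductive $(s_{d-1}-1)$-color strategy on its inner coordinate, where the translation is chosen (copy by copy) so that, as the block of $r$ ranges over $\mathbb{Z}_{s_{d-1}}$, every block value is ``caught'' by some family of copies; this is where $N$ large is used, since there are only $s_{d-1}$ block values but arbitrarily many copies to assign to them. Simultaneously $r$, which sees the colorings of all copies, guesses its own color so as to catch precisely the colorings that the copies leave uncaught. Checking that these two mechanisms together cover every coloring is the heart of the construction, and the color count works out to $(s_{d-1}-1)\,s_{d-1}=s_d-1$.

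For the upper bound I would use the paper's general method to show that with $q=s_d$ colors the adversary can always produce a coloring on which every player guesses wrong. The natural route is again recursive: for a vertex $u$ at depth $k$ with fixed colors on its $k$ ancestors, track the set of colorings of the subtree below $u$ that make every player in that subtree guess incorrectly, and analyze how this ``kill set'' propagates from the leaves (where it has size $s_d-1$ per vertex) up to the root. The adversary's ability to keep a nonempty kill set at every level, when $q=s_d$, is governed by how much ``catching capacity'' each level contributes; these capacities should telescope like the Sylvester partial sums $\sum_{k<d}1/s_k=1-1/(s_d-1)$, so that at $q=s_d-1$ the players can just barely cover all colorings while at $q=s_d$ a surviving all-wrong coloring is forced.

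The main obstacle, for both directions, is pinning down the per-level multiplier \emph{exactly}: the apex over $N$ copies must buy a factor of precisely $s_{d-1}=(s_{d-1}-1)+1$, and it is the extra $+1$ that has to be squeezed out. On the lower-bound side this amounts to showing the apex can catch all $s_{d-1}$ block values by separating the copies' ``bad sets'' through its global view (which genuinely requires $N\to\infty$), and on the upper-bound side to showing it can catch no more; the two must meet at $s_d-1$. This tightness is exactly the extremal (greedy Egyptian-fraction) property of Sylvester's sequence, and arranging the two recursions to land on the same constant is the crux of the argument.
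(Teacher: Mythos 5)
Your join decomposition $G_d(N)=K_1\vee\bigl(N\cdot G_{d-1}(N)\bigr)$ and the identity $s_d-1=(s_{d-1}-1)\,s_{d-1}$ are both correct, but the inductive step of your lower bound --- ``the copies handle the inner coordinate, the apex handles the block'' --- has a genuine gap: the win condition of a hat game does not factor across coordinates. Every player must guess its \emph{entire} hat, so a copy vertex that gets its inner coordinate right but its block wrong contributes nothing, and there is no player whose ``own hat'' is just a block or just an inner color. Worse, no strategy whatsoever can make the copies catch their own hats in the worst case: once the root's color $\sigma$ is fixed (the copies see $\sigma$, so it is pure advice), each copy is playing an ordinary hat game on $G_{d-1}(N)$ with $s_d-1$ colors, and since $s_d-1>s_{d-1}-1\ge \HG(G_{d-1}(N))$ the adversary always has a nonempty set $W_j(\sigma)$ of colorings of copy $j$ making all of its vertices guess wrong. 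Thus the adversary can always kill every copy vertex; the copies' only possible role is to \emph{signal} the root's color, and the players win precisely when the product sets $\prod_j W_j(\sigma)$ are pairwise disjoint over $\sigma\in[s_d-1]$, so that the root can decode $\sigma$ from what it sees and guess it. That disjointness is a statement about \emph{pairs} of strategies at the full palette $[s_d-1]$; it neither follows from nor resembles the induction hypothesis $\HG(G_{d-1}(N))\ge s_{d-1}-1$, which concerns a single strategy at the smaller palette $[s_{d-1}-1]$. This is exactly the step you defer as ``the heart of the construction,'' and it is where the proof is missing. (Your accounting of where $N$ is used is also too weak: $s_{d-1}$ families of copies do not suffice; one needs enough copies to dedicate one to each small subset of the \emph{full} palette.)

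The route can be repaired, but only by strengthening the induction hypothesis to a multi-strategy statement: prove by induction on $i$ that, with the fixed palette of $s_d-1$ colors, there exist $s_{d-i}-1$ strategies on $G_i(N)$ whose ``everyone guesses wrong'' sets have empty common intersection. The base case $G_0(N)=K_1$ is a lone vertex making $s_d-1$ distinct guesses; in the step to $G_i(N)=K_1\vee\bigl(N\cdot G_{i-1}(N)\bigr)$ one assigns a copy to each $s_{d-i}$-subset $T$ of the palette (this needs $N\ge\binom{s_d-1}{s_{d-i}}$), gives that copy the $(s_{d-i}-1)s_{d-i}=s_{d-i+1}-1$ strategies provided by the induction hypothesis (one group of $s_{d-i}-1$ strategies per advice value in $T$), and lets the root's $s_{d-i}-1$ strategies enumerate the at most $s_{d-i}-1$ root colors consistent with what it sees; the multiplier recursion $k\mapsto k(k+1)$ is what reproduces Sylvester's sequence. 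Taking $i=d$ gives a single winning strategy. Executed this way, your approach is a genuinely different proof from the paper's: the paper never inducts across graphs, but instead fixes $q=s_d-1$ and runs one reverse induction on depth inside $G_d(N)$, maintaining for each depth-$i$ vertex $v$ a set of $r_i=s_{i+1}-2$ candidate colorings of $v$ together with its ancestors, and shrinking $r_{i+1}$ to $r_i$ by pigeonhole ($r_{i+1}<(r_i+1)(r_i+2)$) using pre-assigned sets $S_w$ ranging over all of $\binom{[q]^{i+1}}{r_i+1}$. Finally, your upper-bound sketch (kill-set capacities that ``should telescope'' like $\sum_k 1/s_k$) is also not yet a proof, though it is close in spirit to the paper's Lemma~\ref{lem:upper}: the adversary pre-commits vertex $v_i$ to a palette of size $t_i=1+\prod_{v_j\in A(v_i)}t_j$ and colors vertices from right to left, at each step avoiding the at most $t_i-1$ guesses $v_i$ can still make; on $G_d(N)$ this gives $t_i=s_k$ at depth $k$ and hence $\HG(G_d(N))<s_d$.
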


We say that a vertex ordering of a graph $G$ is \emph{$d$-degenerate} if each vertex has at most $d$ neighbors to the left, and that such an ordering has {\it depth} $D$ if the longest left-to-right path contains $D$ edges. Our second main result shows that Theorem~\ref{thm:main} is close to best possible, in the sense that $\HG(G)$ is always doubly-exponentially bounded in terms of $d$ and $D$.

\begin{theorem}
\label{thm:upper}
If $d\ge 2$, $D\ge 1$ and $G$ has a $d$-degenerate vertex ordering of depth $D$, then
\[
\HG(G) < 2^{d^{D+1}}.
\]
\end{theorem}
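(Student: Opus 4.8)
The plan is to prove the contrapositive by induction on the depth $D$: I will show that if the players have a winning strategy on $G$ with $q = 2^{d^{D+1}}$ colors, then a graph of strictly smaller depth admits an ``over-strength'' winning strategy, contradicting the inductive hypothesis. Fix a $d$-degenerate ordering of depth $D$ and let $L_D$ be the set of vertices $v$ whose longest increasing (left-to-right) path ends at $v$ and has exactly $D$ edges. First I would record the structural facts that make the peeling work: (i) $L_D$ is an independent set; (ii) every $v\in L_D$ has all of its (at most $d$) neighbors strictly to its left, i.e.\ $L_D$-vertices have no right-neighbors; and (iii) $G' := G - L_D$, with the induced ordering, is still $d$-degenerate and now has depth at most $D-1$, since any increasing path of length $D$ must terminate in $L_D$. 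The base case is $D=0$, where $G$ is edgeless and $\HG(G) = 1 < 2^{d}$.

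For the inductive step, set $q' := 2^{d^{D}}$, so that the inductive hypothesis gives $\HG(G') < q'$, and note the crucial arithmetic identity $q = 2^{d^{D+1}} = (2^{d^{D}})^{d} = (q')^{d}$. This is exactly what powers the reduction: I identify the color set $[q]$ with the product $[q']^{[d]}$, so that each color is a $d$-tuple of ``small'' colors, one per coordinate. Because each $v\in L_D$ has at most $d$ neighbors, I can injectively assign its neighbors to distinct coordinates $\kappa_v\colon N(v)\hookrightarrow [d]$. Given a purported winning $G$-strategy, the goal is to manufacture a winning $G'$-strategy that uses only $q'$ colors; this contradicts $\HG(G') < q'$, and unwinding the contrapositive then yields $\HG(G) < q = 2^{d^{D+1}}$.

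The heart of the argument, and the step I expect to be the main obstacle, is the construction of this $G'$-strategy. The natural idea is to lift a $q'$-coloring $c'$ of $G'$ to a $q$-coloring of $G$, assign each $L_D$-vertex a color making its guess wrong (possible since $q\ge 2$), and then exploit that, since the $G$-strategy wins, some vertex guesses correctly --- necessarily a vertex of $G'$, because every $L_D$-vertex has been forced to be wrong. Translating ``some $G'$-vertex is correct in $G$'' into ``some $G'$-vertex is correct in $G'$'' is precisely where the difficulty lies: a vertex $u\in V(G')$ sees its $L_D$-neighbors in the $G$-game but not in the $G'$-game, and the very color that neutralizes an $L_D$-vertex $v$ depends on the colors of \emph{all} of $v$'s neighbors --- including $u$ itself, whose own color $u$ is trying to determine. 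Resolving this circular, two-sided dependence is the crux, and it is exactly what the product structure $[q]=[q']^{[d]}$ is designed to handle: the coordinate assignment $\kappa_v$ lets me split the single condition ``$v$ is wrong'' into coordinatewise conditions, routing the datum that each neighbor of $v$ must supply into a coordinate associated with a \emph{different} neighbor, so that no vertex is forced to know its own color to carry out its part. Making this routing consistent across $L_D$-vertices that share neighbors, and verifying that exactly $d$ coordinates suffice --- which is where the hypothesis $d\ge 2$ enters, the surplus coordinates providing the slack that is absent in the $d=1$ (tree) case where the bound genuinely fails --- is the delicate bookkeeping the proof must nail down.
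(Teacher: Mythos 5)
Your structural setup is fine: $L_D$ is independent, its vertices have no right-neighbors, $G-L_D$ has depth at most $D-1$, and the identity $2^{d^{D+1}}=(2^{d^D})^d$ is correct. The gap is that the inductive step --- converting a winning $G$-strategy with $q=(q')^d$ colors into a winning $G'$-strategy with $q'$ colors --- is never actually constructed (you defer it as ``delicate bookkeeping''), and the coordinate-routing mechanism you sketch for it runs into a concrete contradiction. To transfer a win, every $u\in V(G')$ must evaluate its $G$-game guess, which is an \emph{arbitrary} function of the full colors of all of $u$'s $G$-neighbors; so $u$ must compute the entire simulated color $c(v)\in[q']^d$ of each $L_D$-neighbor $v$ from its $G'$-view, and all neighbors of $v$ must compute the \emph{same} value (the $G$-strategy's guarantee applies to one coloring at a time). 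Since no vertex sees its own color, and the neighbors of $v$ need not even be adjacent to one another in a general $d$-degenerate graph, this forces $c(v)$ to be independent of the colors of \emph{every} vertex of $N(v)$. But you also need $c(v)\neq g_v\bigl(c|_{N(v)}\bigr)$, where $g_v$ is $v$'s guess function in the given strategy. If $v$ has $d$ left-neighbors and $g_v$, restricted to the $(q')^d=q$ lifted neighbor-colorings, is a bijection onto $[q]$ --- nothing rules this out for a winning strategy; sum-type guesses on cliques have exactly this shape --- then every color is guessed on some lifted coloring, and no choice of $c(v)$ ignoring $N(v)$'s colors can dodge $v$'s guess on all inputs. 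The product structure does not dissolve this tension: each output coordinate of $g_v$ is still an arbitrary function of \emph{all} of $N(v)$'s colors, so dodging the guess in coordinate $\kappa_v(w)$ requires knowing all of them; conversely, if coordinate $\kappa_v(w)$ of $c(v)$ depends on the colors of $N(v)\setminus\{w\}$, then every other neighbor $w'$ of $v$ cannot compute that coordinate (it depends on $c'(w')$ itself), so $w'$ cannot run its simulation. The two requirements ``every $L_D$-vertex guesses wrong'' and ``every $G'$-vertex can simulate'' are jointly unsatisfiable against such strategies, so the reduction as described cannot be completed.

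The paper avoids this entirely by working on the adversary's side rather than the players' side (Lemma~\ref{lem:upper}): the adversary pre-commits each $v_i$ to a small palette $[t_i]$ with $t_i=1+\prod_{v_j\in A(v_i)}t_j$, then assigns colors from right to left; once the colors to the right of $v_i$ are fixed, $v_i$'s guess depends only on its left-neighbors' colors, which take at most $t_i-1$ values, so some color of $[t_i]$ is never guessed. This right-to-left peeling is the sound form of your layer-peeling: restricting the adversary propagates leftward with no circularity, whereas converting player strategies downward collides with the self-color obstruction above. Note also that in the paper the hypothesis $d\ge 2$ enters only in the closing estimate $a_D\le 2^{d^{D+1}}$ (for $d=1$ the recursion gives $a_D=D+2$, and indeed trees show the stated bound fails), not through any ``surplus coordinate'' mechanism; that part of your intuition does not correspond to where the hypothesis is actually needed.
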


Our last result shows that $\HG(B_{d,n})=d^{(1+o(1))d}$ for $n$ sufficiently large, improving the bounds in \cite{BDFGM} and characterizing the asymptotic growth of $\HG(B_{d,n})$ up to a lower order term in the exponent.
\begin{theorem}
\label{thm:book}
For all $d\ge 1$ and $n$ sufficiently large in terms of $d$, we have
\begin{align}
  (d+1)!\le \HG(B_{d,n}) \le d^{d-2}(d^2+d-1)+1.
\end{align}
\end{theorem}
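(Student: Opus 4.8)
\emph{Reformulation.} My first move is to strip the game down to the clique. Call the $d$-clique the \emph{spine} $v_1,\dots,v_d$ and the $n$ independent vertices the \emph{pages}. A page sees only the spine, so its guess is a function $g\colon[q]^d\to[q]$ of the spine coloring $s$; since duplicated page-functions are redundant (an adversary always colors two copies identically, giving the players nothing), for $n$ large I may assume the pages realize an arbitrary finite family $\mathcal F$ of such functions, and the value stabilizes to a common number I write $\HG(B_{d,\infty})$. A spine vertex $v_i$ sees $s_{-i}$ and the whole page coloring $c=(c_g)_{g\in\mathcal F}$. Setting $U(c)=\{s:g(s)\ne c_g\text{ for all }g\in\mathcal F\}$, the adversary wins exactly when some $c$ and some $s\in U(c)$ also evade the spine. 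Because $v_i$ may use a different guessing rule for each $c$, the players win if and only if for every $c$ the set $U(c)$ is \emph{catchable} on $K_d$: coverable by $d$ sets $A_1,\dots,A_d$, where $A_i$ meets each line in direction $i$ at most once. Thus $\HG(B_{d,\infty})$ is the largest $q$ admitting a finite $\mathcal F$ with every $U(c)$ catchable, and the basic quantitative tool is that a catchable set meets any combinatorial box $\prod_i S_i$ in at most $\sum_i\prod_{j\ne i}|S_j|$ points.

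\emph{Upper bound.} To bound $\HG$ from above I would produce, for large $q$ and any $\mathcal F$, a single $c$ whose survivor set $U(c)$ violates the box inequality. The clean instance is the cube $B=\prod_iS_i$ with all $|S_i|=d+1$: if $|B|=(d+1)^d<q$ then every page is non-surjective on $B$, so the adversary can set each $c_g$ outside $g(B)$, forcing $B\subseteq U(c)$; since $(d+1)^d>d(d+1)^{d-1}$ this already exhibits a non-catchable $U(c)$ and gives $\HG(B_{d,n})\le(d+1)^d=d^{(1+o(1))d}$, matching the claimed order. Sharpening the constant from $(d+1)^d$ to $d^{d-2}(d^2+d-1)+1$ is the delicate part: one can no longer require the entire box to survive, and must instead let the pages destroy a bounded number of box-cells while retaining more than $\sum_i\prod_{j\ne i}|S_j|$ survivors, optimizing the (possibly unequal) side lengths and controlling how the adversary's color choices force the pages' kill-sets to overlap. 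The main obstacle is exactly this control: naively summing over the unboundedly many pages is lossy, so the refined count must exploit the function-graph structure of catchable sets rather than a crude union bound — this is where a book-specific refinement of the method behind Theorem~\ref{thm:upper} is needed.

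\emph{Lower bound.} For the matching $(d+1)!$ lower bound the natural approach is induction on $d$, proving $\HG(B_{d,\infty})\ge(d+1)\,\HG(B_{d-1,\infty})$; with the base case $\HG(B_{1,\infty})=2$ (a star is a tree) this yields $(d+1)!$. Writing $q'=\HG(B_{d-1,\infty})$ and $q=(d+1)q'$, I split each color as $(a,b)\in\mathbb Z_{d+1}\times[q']$. The extra factor $d+1$ should come from the clique: on the $a$-coordinates the $d$ spine vertices run the complete-graph sum strategy modulo $d+1$, which is correct about the $a$-part on every coloring whose $a$-sum avoids one residue. The pages then play two roles: to broadcast enough about the $b$-coordinates that a vertex certifying the right $a$-value can simultaneously pin down its $b$-value, and to treat the single leftover residue class by running, on the $b$-coordinates, the inductively guaranteed family for $B_{d-1}$. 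In the survivor language this means designing $\mathcal F$ so that for every adversarial $c$ the set $U(c)$ is a union of $d$ function-graphs: $d-1$ inherited from the smaller game plus one supplied by the sum strategy.

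\emph{The crux.} The hardest step is that the clique sum strategy controls only the $a$-coordinate, whereas catching a spine vertex requires guessing its full color $(a,b)$; the pages must therefore hand each potentially-correct vertex its missing $b$-value, and do so robustly, since the adversary sets every page color to a lie. Verifying that this can be arranged for all $c$ at once — equivalently, that every $U(c)$ stays catchable with exactly $d$ directions available — is the step I expect to require the most care, and it is the factorial-sized palette that supplies just enough slack for the $d-1$ inherited directions together with the one sum direction to cover everything.
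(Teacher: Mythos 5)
Your reformulation in terms of survivor sets $U(c)$ and catchable sets is sound, and it is essentially the paper's own reduction of $\HG(B_{d,n})$ to the coverability number $h(\mathbb{N}^d)$; your box inequality (a catchable set meets $\prod_i S_i$ in at most $\sum_i\prod_{j\ne i}|S_j|$ points) is also the right tool. But neither bound stated in the theorem is actually established. For the upper bound, your cube argument is correct but proves only $\HG(B_{d,n})\le (d+1)^d$, which is strictly weaker than the claimed $d^{d-2}(d^2+d-1)+1$ for every $d\ge 2$, and your roadmap for sharpening it points away from the actual fix. No control of ``page kill-sets'' and no union bound over pages is ever needed: the adversary simply pre-commits to an \emph{arbitrary subset} $S$ of the box $[d]^{d-1}\times[d+1]$ of size $d^{d-2}(d^2+d-1)+1$. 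By your own box inequality, at most $(d-1)\,d^{d-2}(d+1)+d^{d-1}=d^{d-2}(d^2+d-1)=|S|-1$ points of $S$ can be caught along axis lines, so \emph{every} such subset is non-catchable; and the pigeonhole that hands each page a never-guessed color only needs the number of colors to exceed $|S|$, not the cardinality of the whole box. Playing this with $q=|S|+1$ colors kills all pages at once and leaves an uncaught point of $S$, giving exactly the stated bound. This one-line change (survive on a subset of the box rather than the full box) is the idea you were missing.

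The lower bound has the more serious gap. Your target recursion $\HG(B_{d,\infty})\ge (d+1)\HG(B_{d-1,\infty})$ is the right one (it is equivalent, through the reduction, to the paper's $h(\mathbb{N}^d)\ge (d+1)h(\mathbb{N}^{d-1})+d$), but the proposed mechanism cannot work as stated, and you leave the crux unresolved. If all $d$ spine vertices play the sum-mod-$(d+1)$ strategy on the $a$-coordinates, then each vertex's entire catching capacity---one point per axis-parallel line---is spent on the residue classes $1,\dots,d$: the point $v_i$ guesses on any line always has $a$-sum $\equiv i$. Nothing is left for the class $\sigma\equiv 0$, and the pages can never eliminate that class on their own, since for any fixed spine coloring $s$ the adversary may choose $c_g\ne g(s)$ for every page $g$, making $s$ survive the pages; playing such an $s$ with $\sigma(s)\equiv 0$ then defeats everyone. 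So ``$d-1$ inherited function-graphs plus one sum graph'' is not a decomposition that can cover every $U(c)$; the spine must deviate from the sum strategy on some lines depending on $c$, and coordinating that deviation is precisely the step you flag as hard and do not carry out. The paper sidesteps this conflict by deploying the sum trick in a different place entirely: in the covering formulation, the points that are ``abundant'' in all $d$ directions are confined to a small box $[d]^d$, where the mod-$d$ sum trick gives a perfect partition into $d$ function graphs, while all remaining points are covered inductively inside non-abundant hyperplanes $P_{i,y}$ with $|S\cap P_{i,y}|\le h(\mathbb{N}^{d-1})$; a disjointness argument for the line families $\mathcal{L}_{i,y}$ certifies that these coverings never compete for the same line. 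That abundance/type decomposition, rather than a global residue-class split, is the missing idea, and without it (or a genuine substitute) the lower bound remains unproven.
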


In the next section we prove the upper bounds in Theorems~\ref{thm:main} and \ref{thm:upper}; the main innovation is a deterministic adversary strategy that precommits to using very few colors on certain vertices of $G$. The lower bound in Theorem~\ref{thm:main} is then proved separately in Section~\ref{sec:lower}; we build a guessing strategy starting with the largest depth and show that, from depth $i+1$ to depth $i$, we can roughly square-root the number of candidate colorings.
We finish by proving Theorem~\ref{thm:book} in Section~\ref{sec:book}.

\paragraph{Related work}
Perhaps the most famous hat guessing puzzle was introduced by Ebert \cite{E}.
Each player is independently given a red or blue hat with equal probability and the players, seeing the other players' hats, simultaneously either guess their hat color or pass.
They win if at least one player guesses correctly and no player guesses incorrectly.
The 3 player version was popularized in the New York Times \cite{R2}.
Many variations of this puzzle have been studied (see \cite{F2, K} for surveys), including one \cite{BHKL} where, like in this work, the sight graph is an arbitrary graph, rather than a clique.
Hat guessing puzzles have found connections to coding theory \cite{EMV}, to auctions \cite{AFGHIS}, to network coding \cite{GR,R}, to finite dynamical systems \cite{G}, and possibly to understanding DNA \cite{GKRT}.

\section{Upper bounds}
We prove a general upper bound on $\HG(G)$ that implies both Theorem~\ref{thm:upper} and the upper bound in Theorem~\ref{thm:main}.

If $G$ is a graph with vertices ordered $v_1,\ldots, v_n$, define $A(v_i)\coloneqq N(v)\cap \{v_1,\ldots, v_{i-1}\}$ to be the set of neighbors of $v_i$ to the left of $v_i$  ($A$ for ``ancestor'').
Let $[n]$ be the set $\{1,\dots,n\}$, and for a set $S$, let $\binom{S}{k}$ denote the collection of $k$-element subsets of $S$.

\begin{lemma}\label{lem:upper}
  Let $G$ be a graph with vertices ordered $v_1,\ldots, v_n$, and define $t_i$ for $i=1,\ldots, n$ recursively by
  \[
  t_i \coloneqq 1 + \prod_{v_j \in A(v_i)} t_j,
  \]
  where the empty product is $1$. Then, $\HG(G) < \max\{t_1,\ldots, t_n\}$.
\end{lemma}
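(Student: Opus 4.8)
The plan is to construct an explicit adversary strategy that defeats any player strategy when the number of colors $q$ equals $\max_i t_i$. The key idea suggested by the recursive definition $t_i = 1 + \prod_{v_j \in A(v_i)} t_j$ is that the adversary should commit in advance to using only a limited palette of colors on each vertex, where the size of the palette for $v_i$ is exactly $t_i$ (hence the "$+1$" and the product structure). I would process the vertices in order $v_1, \ldots, v_n$ and maintain, for each already-processed vertex, a set of "live" color assignments consistent with no player having guessed correctly so far.

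Let me think about the mechanics. Fix $q = \max_i t_i$ colors. The adversary wants to show the players cannot win, i.e., there is a coloring under which every player guesses wrong. I'd process vertices left to right, assigning to each $v_i$ a color from a restricted set $S_i$ of size $t_i$ (if $t_i \le q$; vertices with $t_i > q$ are where the bound bites, but since $q = \max t_i$ we have $t_i \le q$ throughout). The crucial recursive invariant is that, having fixed the colors of all ancestors $A(v_i)$, the player at $v_i$ sees those ancestor colors and makes a single guess as a function of the full neighborhood — but the adversary's restriction to palettes lets us count. Specifically, the player at $v_i$ guesses one color based on what it sees; the adversary wants $v_i$'s true color to avoid that guess. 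Since the guess depends on the colors of $v_i$'s neighbors (including those to the right), I'd need to be careful about the dependency direction.

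**The core counting argument** I expect to run as follows. Think of the adversary as choosing a coloring where each $v_i$ takes a value in a set $S_i$ with $|S_i| = t_i$. For a fixed assignment of colors to $A(v_i)$, the number of "bad" assignments to $v_i$ (those the player at $v_i$ might guess) must be controlled. The recursion $t_i = 1 + \prod_{v_j \in A(v_i)} t_j$ strongly suggests the following inductive claim: the number of color-tuples on $\{v_1, \ldots, v_i\}$ (within the palettes) that survive — i.e., under which players $v_1, \ldots, v_i$ all guess wrong — is at least $\prod_{j \le i}(t_j - 1) = \prod_{j\le i}\prod_{v_k \in A(v_j)} t_k$, or some similarly telescoping quantity that stays positive. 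The "$+1$" in each $t_i$ provides exactly one extra color in the palette beyond what is needed to dodge all guesses determined by the ancestors, and the product $\prod_{v_j \in A(v_i)} t_j$ counts the number of distinct ancestor-configurations the player at $v_i$ must respond to. The main obstacle is handling the fact that $v_i$'s guess can depend on colors of neighbors appearing later in the order: I would resolve this by fixing the entire palette assignment structure first and arguing via a pigeonhole/averaging or a greedy extension that a surviving full coloring exists, rather than committing colors irrevocably left to right. I'd likely phrase the final step as: there are $\prod_i t_i$ total palette-colorings, each player $v_i$ "kills" at most $\prod_i t_i / t_i \cdot (\text{something})$ of them, and the union bound (or a direct product bound) leaves a nonempty survivor set because $t_i - 1 = \prod_{v_j \in A(v_i)} t_j$ exactly balances the count of configurations player $v_i$ can rule out.
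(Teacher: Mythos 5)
You have the key palette idea---pre-committing vertex $v_i$ to a set of $t_i$ colors, so that the ancestor configurations number exactly $\prod_{v_j \in A(v_i)} t_j = t_i - 1$---and you correctly identify the central obstacle: a player's guess depends on neighbors to its \emph{right} as well as its ancestors. But your proposed resolution does not work as stated. The union bound fails: player $v_i$ can kill up to $\prod_{j \ne i} t_j$ palette-colorings (for each of the $\prod_{j \in N(v_i)} t_j$ assignments to its full neighborhood, one color of $v_i$ is the guessed one, with all remaining vertices free), i.e.\ a $1/t_i$ fraction of all $\prod_j t_j$ colorings, and $\sum_i 1/t_i$ is typically far larger than $1$: already for the edgeless graph every $t_i = 2$ and the killed fractions sum to $n/2$. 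Nothing about the identity $t_i - 1 = \prod_{v_j\in A(v_i)} t_j$ rescues a union bound, because what $v_i$ can kill is governed by its \emph{entire} neighborhood, not just its ancestors. Likewise, your prefix induction (``the number of surviving tuples on $\{v_1,\dots,v_i\}$ is at least $\prod_{j \le i}(t_j-1)$'') is not even well-defined: whether players $v_1,\dots,v_i$ guess wrong is not a function of the colors on $\{v_1,\dots,v_i\}$ alone, precisely because of the right-neighbor dependencies you flagged.

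The missing idea is to run the greedy in the \emph{opposite} direction, right to left, which is what the paper does. Once every vertex to the right of $v_i$ has already been assigned a color, the guess of $v_i$ becomes a function of the colors on $A(v_i)$ only; by the palette pre-commitment these range over at most $t_i - 1$ tuples, so the guess of $v_i$ takes at most $t_i - 1$ distinct values, and the adversary can give $v_i$ a color in $[t_i]$ that $v_i$ is guaranteed not to guess \emph{no matter how the vertices to the left of $v_i$ are colored later}. That quantifier ordering---the chosen color is safe against all future leftward assignments simultaneously---is exactly what your left-to-right processing and global counting schemes lack, and it is what makes the argument close. With that reversal your counting intuition becomes trivial: at each step at least one safe color exists, and the greedy terminates with a coloring on which every player is wrong.
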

\begin{proof}
    Fix any guessing strategy for the players, which we may assume the adversary knows. We describe an adversary strategy which prevents any vertex from guessing correctly using $q$ colors, where $q = \max\{t_1,\ldots, t_n\}$.

  The adversary pre-commits to using only a color from $[t_i]$ for the hat of $v_i$.
  The adversary then chooses the hat colors of $v_i$ in decreasing order of $i$, starting from the rightmost vertex $v_n$.
  Assume that, for some $1\le i\le n$, all vertices to the right of $v_i$ have been assigned hat colors.
  Over all possible remaining ways to place the hats, there are at most
  \[
  \prod_{j\in A(v_i)} t_j = t_i-1
  \]
  distinct tuples of colors that can appear on the set $A(v_i)$. Since the hat colors to the right of $v_i$ have all been assigned, the guess of $v_i$ now depends solely on the colors in $A(v_i)$, so at most $t_i-1$ colors are possible for $v_i$ to guess. Hence, the adversary can find a color out of $[t_i]$ that $v_i$ is guaranteed not to guess.
  
  We have shown that there exists a color $q_i\in[t_i]$  \emph{not} guessed by $v_i$, regardless of the hat assignments yet to be made. Giving $v_i$ this hat color and continuing to the left, we are done.
\end{proof}

We now can deduce the upper bounds simply by exhibiting the appropriate vertex orderings.

\begin{proof}[Proof of Theorem~\ref{thm:upper}]
Suppose $G$ has a vertex ordering where each vertex has at most $d$ left-neighbors, and the longest left-to-right path has length $D$. We say that $v_i$ has depth $k$ if the longest left-to-right path with $v_i$ as its rightmost vertex has length $k$. Applying Lemma~\ref{lem:upper} to this ordering, we find that if $a_k$ satisfies $a_0 = 2$ and $a_k = a_{k-1}^d + 1$ for $k >0$, then $t_i \le a_k$ if $k$ is the depth of $v_i$, and so $\HG(G) < a_D$. It is easy to check that $a_D \le 2^{d^{D+1}}$, as desired.
\end{proof}

The upper bound in Theorem~\ref{thm:main} is similar.

\begin{proof}[Proof of upper bound in Theorem~\ref{thm:main}]
 Order the vertices of the graph $G_d(N)$ so that every vertex comes after its parent in the tree $T_d(N)$, so that for each $v\in G_d(N)$, the set $A(v)$ is exactly the set of ancestors of $v$ in $T_d(N)$.
 By the definition of $G_d(N)$, each vertex at depth $k$ has exactly one ancestor of every depth smaller than $k$. Applying Lemma~\ref{lem:upper} to this ordering, we see that $t_i = s_k$ where $k$ is the depth of $v_i$ and $s_k$ is the $k$-th term of Sylvester's sequence. The depth is $d$, so $\HG(G)<s_d$, as desired.
 \end{proof}
 
\section{The lower bound}\label{sec:lower}
 
  We now show the lower bound $\HG(G_d(N))\ge s_d-1$ for sufficiently large $N$. Our guessing strategy uses repeatedly the simple observation that each player can act under the assumption that every other player guesses incorrectly, since otherwise they would collectively win.
  
  \begin{proof}[Proof of the lower bound in Theorem~\ref{thm:main}.]
  Let $q = s_d - 1$ and $N\ge q^{dq}$.
  For $i=0,1,\dots,d-1$, let $r_i=s_{i+1}-2$ and let $r_d = q^{d+1}$, so that in particular $r_{d-1} = q -1$.
  Order the family $[q]^*$ of all tuples over $[q]$ by length, with ties broken lexicographically. Order the vertices of $G = G_d(N)$ so that each vertex comes to the right of its parent in $T_d(N)$.
  
  We say $v\in V(G)$ is a child (parent, descendant, ancestor) of $w\in V(G)$ if and only if it is a child (parent, descendant, ancestor) of $w$ in $T_d(N)$.
  Let $D(v)$ denote the set of descendants of $v$, let $A(v)$ denote the set of ancestors of $v$, and let $\bar A(v)=A(v)\cup \{v\}$.
  If $\chi$ is an assignment of hat colors to $V(G)$ and $U\subset V(G)$, let $\chi(U)\in [q]^{|U|}$ be the tuple of hat colors of elements of $U$, ordered according to the vertex ordering of $G$. 
  
  For each $v\in V(G)$ of depth $0\le i\le d-1$, assign each of its children $w$ a subset $S_w\subset[q]^{i+1}$ of size $r_i+1$ such that all $\binom{q^{i+1}}{r_i+1}$ such subsets are assigned to at least one child of $v$.
  This is possible as the number of children of $v$ is $N \ge \binom{q^{i+1}}{r_i+1}$ for all $i\le d-1$.

  We are now ready to describe the guessing strategy. Roughly speaking, the idea is that the vertices of larger depth in the tree are so numerous that the vertices of smaller depth can deduce a lot of information about their own colors from the assumption that their descendants all guess incorrectly. The strategy will be built starting from the largest depth $d$, so that we first fix the guess of $w$ before the guess of any element of $A(w)$. Crucially, by the definition of $G$, all information available to $w$ is available to any $v\in A(w)$ except for the color of $v$.
  
  We prove that, for any $0\le i\le d$ and any depth-$i$ vertex $v$, the vertices of $D(v)$ can guess their hat colors so that vertex $v$ (and its ancestors) can deduce a set of $r_i$ possibilities for $\chi(\bar A(v))$, using solely the information $\chi(D(v))$. 
  Formally, there is a guessing strategy for vertices of depth $>i$ and a functions $f_v:[q]^{D(v)} \to \binom{[q]^{i+1}}{r_i}$ depending only on $\chi(D(v))$, such that if all vertices of depth greater than $i$ guess wrong, then $f_{v}(\chi(D(v))$ contains $\chi(\bar A(v))$.
 
  We proceed by reverse induction on $i$.
  The base case $i=d$ is trivial, as for every vertex $v$ of depth $d$, we can take $f_v(\cdot)=[q]^{d+1}$, which indeed is a set of size $r_d$ and contains every possible hat assignment for $\bar A(v)$.

  Now suppose $i\le d-1$ and the assertion is true for $i+1$, so that we have built the guessing strategy for vertices of depth greater than $i+1$, and there exist functions $f_w$ for vertices $w$ at depth $i+1$.
  For each such $w$, let $R_w\coloneqq f_w(\chi(D(w)))$ be the set of $r_{i+1}$ assignments that $w$ can ``deduce'' for $\chi(\bar A(w))$.
  Say that a color $c \in [q]$ is \emph{$w$-abundant} if there are at least $r_i+1$ colorings $\chi' \in R_w$ with $\chi'(w)=c$, and let $C_w$ be the set of $w$-abundant colors.
  
  If $i = d-1$, then there are clearly at most $q=r_i+1$ many $w$-abundant colors.
  If $i < d-1$, 
  \[
  |R_w| = r_{i+1} = r_i^2+3r_i+1 < (r_i + 1)(r_i+2),
  \]
  so by the pigeonhole principle, there are again at most $r_i+1$ $w$-abundant colors.
  Thus, we may define $\phi_w:C_w\to S_w$ mapping the $k$th smallest $w$-abundant color to $k$th smallest element of $S_w$ for all $k$.
  Note that vertex $w$ can compute $\phi_w$: $w$ sees all of $D(w)$, so $w$ can compute $R_w$, and thus $C_w$, and thus $\phi_w$ as $S_w$ was predetermined.
  If $\chi(\bar A(v)) = \phi_w(c)$ for some $w$-abundant color $c$, vertex $w$ guesses $c$. Otherwise, $w$ guesses arbitrarily.

  Let $v$ be a vertex of depth $i$.
  We show that, from the hat colors of $D(v)$, we can compute a set $f_v(\chi(D(v))$ of $r_i$ hat assignments to $\bar A(v)$ that contains the correct assignment if all vertices in $D(v)$ guess incorrectly.
  First, from $\chi(D(v))$, for each child $w$ of $v$, one can compute the set of $w$-abundant colors $C_w$ and the injection $\phi_w$, and hence can determine if $v$ has a child $w$ for which $\chi(w)$ \emph{not} $w$-abundant.
  
  We break into two cases based on whether $c_w = \chi(w)$ is $w$-abundant for all children $w$ of $v$. If there exists a single $w$ for which $c_w$ is not $w$-abundant, then there are at most $r_i$ colorings $\chi' \in R_w$ for which $\chi'(w)=c_w$, and $v$ can just let $f_v(\chi(D(v))$ be this set of at-most-$r_i$ colorings, restricted to $\bar A(v)$. 
  If all descendants of $w$ guess incorrectly, $R_w$ contains the correct coloring of $\bar A(w)$, so $f_v(\chi(D(v))$ contains the correct coloring of $\bar A(v)$.

  Otherwise, all children $w$ of $v$ have a hat color that is $w$-abundant.
  Call an assignment to $\bar A(v)$ \emph{good} if it is \emph{not} equal to $\phi_w(\chi(w))$ for any child $w$.
  Since the sets $S_w$ range over all $(r_i+1)$-subsets of $[q]^{i+1}$, and $\phi_w(\chi(w))\in S_w$ always, there are at most $r_i$ good assignments.
  Let $f_v(\chi(D(v)))$ be the set of good assignments, with some arbitrary assignments thrown in to make it size exactly $r_i$.
  Suppose all vertices in $D(v)$ guess incorrectly.
  We show the assignment to $\bar A(v)$ is good.
  For each $w$, either $\chi(\bar A(v))$ is not in the image of $\phi_w$, or $w$ guesses the color corresponding to $\chi(\bar A(v))$, which cannot be $\chi(w)$ since we assume $w$ guesses incorrectly. 
  In either case, $\chi(\bar A(v))\neq \phi_w(\chi(w))$ for all children $w$ of $v$, so the assignment to $\bar A(v)$ is good, and therefore in $f_v(\chi(D(v)))$.
  This shows how to construct $f_v(\cdot)$ in all cases, completing the induction.
  
  By the induction, it follows that the root vertex $v_0$ at depth $0$ can determine a set $f_{v_0}(\chi(D(v_0)))$ of $r_0 = s_1 - 2 = 1$ colorings for $\bar A(v_0) = \{v_0\}$, and so $v_0$ uniquely determines its hat color assuming all the other vertices guess incorrectly. This completes the proof.
\end{proof}

\section{Books}
\label{sec:book}
In this section, we prove Theorem~\ref{thm:book}. Recall that the \emph{book graph} $B_{d,n}$ is obtained by removing an $n$-clique from a complete graph $K_{n+d}$. The vertices of the removed $n$-clique are called the {\it pages} of the book, and the remaining $d$ vertices are its {\it spine}. Note that $B_{d,n}$ is $d$-degenerate. 

We first reduce $\HG(B_{d,n})$ to an equivalent geometric problem, introduced by Bosek, Dudek, Farnik, Grytczuk, and Mazur~\cite{BDFGM}. Let $h(\mathbb{N}^d)$ (which is $\mu_a(K_d)$ in the notation of~\cite{BDFGM}) denote the largest $t$ such that every $t$-subset of $\mathbb{N}^d$ can be covered by picking at most one point from every axis-aligned line. Formally, we say that {\it a set $S\subseteq \mathbb{N}^d$ is coverable} if there exists a partition $S=S_1 \sqcup S_2 \sqcup \cdots \sqcup S_d$ such that $S_i$ contains at most one point along any line parallel to the $i$-th coordinate axis. For example, $h(\mathbb{N}^2) = 5$ because any $5$-set in the plane is coverable, but the $6$-set $\{0,1\} \times \{0, 1, 2\}$ is not.
A similar concept was also considered e.g. in \cite{BG}.

\begin{lemma} \label{lem:book-reduction}
For any $d\ge 1$ and $n$ sufficiently large in terms of $d$,
\[
\HG(B_{d,n}) = h(\mathbb{N}^d) + 1.
\]
\end{lemma}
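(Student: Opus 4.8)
The plan is to translate the hat game on $B_{d,n}$ into a purely combinatorial condition about which sets of spine-colorings the spine can handle, and then read off both inequalities from the definition of $h(\mathbb{N}^d)$. Throughout I identify the $q$ colors with $[q]$ and a coloring of the spine with a point $x=(x_1,\dots,x_d)\in[q]^d$, so that spine vertex $s_i$ sees $x_{-i}$ (all coordinates but the $i$-th) together with all page colors, while a page $p$ sees only $x$ and guesses via some function $g_p\colon[q]^d\to[q]$. As in the lower-bound proof of Theorem~\ref{thm:main}, I may assume every player acts as though all others guess incorrectly.

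First I would establish the central equivalence. Suppose the players fix page functions $\{g_p\}$ and the adversary reveals page colors $(c_p)$. Under the all-others-wrong assumption the set of spine colorings still consistent with the observed page colors is $X((c_p))=\{x\in[q]^d : g_p(x)\ne c_p \text{ for all }p\}$, and every spine vertex can compute $X$ from $(c_p)$. The key claim is that the spine can guarantee to catch every $x\in X$ if and only if $X$ is coverable: a covering $X=X_1\sqcup\cdots\sqcup X_d$ with $X_i$ meeting each line parallel to the $i$-th axis at most once lets $s_i$ recover $x_i$ from $x_{-i}$ whenever $x\in X_i$, and conversely the set caught by $s_i$ is always functional in the $i$-th direction, so catching all of $X$ forces such a covering. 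Combined with the observation that any superset of a non-coverable set is non-coverable, this shows that the players win with functions $\{g_p\}$ if and only if $X((c_p))$ is coverable for every $(c_p)\in[q]^n$.

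Next I would reformulate this winning condition in a form that exposes the dependence on $h(\mathbb{N}^d)$. The adversary can make a non-coverable set $S\subseteq[q]^d$ survive (i.e.\ lie inside some $X((c_p))$) exactly when, for every page $p$, the restriction $g_p|_S$ fails to be surjective onto $[q]$, since then the adversary may pick $c_p\notin g_p(S)$; conversely a page with $g_p(S)=[q]$ blocks $S$. Hence the players win with $q$ colors (for some page design and some $n$) if and only if for every non-coverable set $S\subseteq[q]^d$ there is a function $[q]^d\to[q]$ that is surjective on $S$, which is possible precisely when $|S|\ge q$.

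Finally I would extract the two bounds from the size of minimal non-coverable sets. By definition of $h=h(\mathbb{N}^d)$, every set of size at most $h$ is coverable, so every non-coverable set has at least $h+1$ points, and there is one of size exactly $h+1$ which, after relabeling coordinate values, we may place inside $[h+1]^d$. For the lower bound I take $q=h+1$: every non-coverable $S$ then has $|S|\ge h+1=q$, so a surjective-on-$S$ page exists, and installing one such page for each of the finitely many non-coverable subsets of $[q]^d$ gives a winning strategy once $n$ is at least that number, yielding $\HG(B_{d,n})\ge h+1$. For the upper bound I take $q=h+2$ and fix any page design: a non-coverable set $S$ of size $h+1=q-1$ has $|g_p(S)|\le q-1<q$ for every page $p$, so no page is surjective on $S$, the adversary lets $S$ survive and colors the spine at a point of $S$ that the spine misses, giving $\HG(B_{d,n})\le h+1$ for every $n$. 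Together these give $\HG(B_{d,n})=h(\mathbb{N}^d)+1$. I expect the main work to be the central equivalence of the second paragraph---pinning down exactly what each spine vertex can deduce from the page colors and verifying the clean ``catch iff coverable'' dichotomy---while the remaining steps are short consequences of the definition of $h$.
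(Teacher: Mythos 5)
Your proof is correct, and it is in fact more self-contained than the paper's. For the upper bound the two arguments coincide in substance: you play with $q=h(\mathbb{N}^d)+2$ colors and fix a non-coverable $S$ of size $h(\mathbb{N}^d)+1$; your observation that no page function can be surjective on $S$ (so the adversary can choose each page's color outside the page's image of $S$) is exactly the paper's pigeonhole step, and your observation that the set of spine colorings caught by a given spine vertex is functional in that vertex's coordinate direction---so that a non-coverable $S$ must contain a point the whole spine misses---is exactly the paper's final step with the sets $S_u$. Where you genuinely depart from the paper is the lower bound: the paper does not prove it at all, citing \cite[Theorem 13]{BDFGM}, whereas you give a direct construction, dedicating one page to each non-coverable subset $S\subseteq[q]^d$ with a guessing function surjective on $S$ (which exists since every non-coverable set has size at least $h(\mathbb{N}^d)+1=q$), so that the surviving set $X((c_p))$ is always coverable and the spine, which sees all page colors, can play a pre-agreed covering of it. Your unifying equivalence---the players win with $q$ colors, for $n$ large, if and only if every non-coverable subset of $[q]^d$ has size at least $q$---packages both inequalities cleanly; the only cost is that your winning strategy needs $n$ to exceed the number of non-coverable subsets of $[q]^d$, which is harmless since the lemma allows $n$ sufficiently large in terms of $d$. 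One small point worth making explicit when you write this up: in the ``catch iff coverable'' dichotomy, the sets caught by distinct spine vertices need not be disjoint, so to produce the partition $X=X_1\sqcup\cdots\sqcup X_d$ you should disjointify them (assign each caught point to the least index that catches it); this is immediate but is the kind of step a referee will want stated.
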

\begin{proof}
In~\cite[Theorem 13]{BDFGM}, it was shown that $\HG(B_{d,n})\ge h(\mathbb{N}^d) + 1$ for all $n$ large enough.
We show $\HG(B_{d,n})\le h(\mathbb{N}^d)+1$ for all $n$. Let $U$ denote the spine of $B_{d,n}$ and $V$ denote its pages, so that $U$ forms a $d$-clique complete to the $n$ pages $V$, which form an independent set. Let $S\in \mathbb{N}^d$ be a set of size $q=h(\mathbb{N}^d) + 1$ that is not coverable. Without loss of generality, $S$ is a subset of $[q]^d$, so we can also view elements of $S$ as hat colorings of $U$. In fact the adversary will pre-commit to using only elements of $S$ as the coloring on $U$.

  Fix a guessing strategy on $B_{d,n}$ with colors $[q+1]$, and for any $s\in S$ and $v\in V$ let $f_v(s)$ denote the guess that vertex $v$ makes if $s$ is the coloring of $U$. Since $|S| = q$, by the pigeonhole principle there is some color $q_v \in [q+1]$ that never appears in $\{f_v(s) : s\in S\}$.
  Give each $v\in V$ hat color $q_v$; this guarantees that no $v\in V$ guesses correctly.
  For each $u\in U$, the subset $S_u\subseteq S$ of colorings of $U$ where $u$ guesses correctly contains at most one point along any line parallel to $u$'s coordinate axis in $\mathbb{N}^d$.
  By the definition of $S$, there must exist some $s\in S$ outside all of the $S_u$. Thus there is a hat assignment where no vertex guesses correctly, and the upper bound follows.
\end{proof}

A lower bound $h(\mathbb{N}^d)\ge 2^d - 1$ was shown in~\cite{BDFGM}. We give a stronger lower bound and an upper bound that matches it up to $o(1)$ in the exponent. 

\begin{lemma} For all $d\ge 1$,
 \[
 (d+1)! - 1 \le h(\mathbb{N}^d)\le d^{d-2}(d^2+d-1).
 \]
\end{lemma}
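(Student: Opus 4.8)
The plan is to recast coverability as a bipartite matching condition and then analyze the extremal sets via the discrete Loomis--Whitney inequality (for the lower bound) and an explicit construction (for the upper bound). For a finite $T\subseteq\mathbb N^d$ and $i\in[d]$, write $\pi_i(T)\subseteq\mathbb N^{d-1}$ for the projection forgetting the $i$-th coordinate and set $p_i(T)=|\pi_i(T)|$, so $p_i(T)$ counts the lines parallel to the $i$-th axis that meet $T$. A partition $T=S_1\sqcup\cdots\sqcup S_d$ witnesses coverability exactly when $\pi_i$ is injective on each $S_i$; assigning each point to its chosen axis-line turns this into a system of distinct representatives for the points of $T$ among the $\sum_i p_i(T)$ lines meeting $T$. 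Hence the easy counting direction gives $|T|\le\sum_i p_i(T)$ whenever $T$ is coverable, while Hall's theorem shows $T$ is coverable as soon as every subset $T'\subseteq T$ satisfies $\sum_i p_i(T')\ge|T'|$. In particular $h(\mathbb N^d)+1$ equals the least size of a \emph{deficient} set, i.e.\ a set $T$ with $\sum_i p_i(T)\le|T|-1$.

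For the lower bound I would show any deficient set has $|T|\ge(d+1)!$. Writing $n=|T|$, the deficiency $\sum_i p_i(T)\le n-1$ and AM--GM give $\prod_i p_i(T)\le\big(\tfrac{n-1}{d}\big)^d$, while the discrete Loomis--Whitney inequality gives $n^{d-1}\le\prod_i p_i(T)$. Combining,
\[
d^{d}\,n^{d-1}\le (n-1)^{d}.
\]
For $d\ge 3$ one bounds $(n-1)^d\le (n-1)\,n^{d-1}$ to deduce $d^d\le n-1$, and since $d^{d}\ge(d+1)!$ for $d\ge3$ this already yields $n\ge(d+1)!$; the cases $d=1$ (the inequality reads $1\le n-1$) and $d=2$ (it reads $4n\le(n-1)^2$, forcing $n\ge 6=3!$) are handled by hand. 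Thus every set of size at most $(d+1)!-1$ has all subsets satisfying Hall's condition and is coverable, giving $h(\mathbb N^d)\ge(d+1)!-1$.

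For the upper bound I would exhibit one deficient set of the required size. Start from the box $B=[d]^{d-1}\times[d+1]$, for which $p_d(B)=d^{d-1}$ and $p_i(B)=d^{d-2}(d+1)$ for $i<d$, so $\sum_i p_i(B)=d^{d-2}(d^2+d-1)$ while $|B|=d^{d-2}(d^2+d)$; thus $B$ has deficiency exactly $d^{d-2}$. To trim the excess $d^{d-2}-1$ without disturbing any shadow, identify the first $d-1$ copies of $[d]$ with $\mathbb Z_d$ and delete from the top layer the punctured diagonal hyperplane $R=\{(y,d+1):y\in\mathbb Z_d^{d-1},\ \sum_j y_j\equiv 0\ (\mathrm{mod}\ d),\ y\neq 0\}$, which has size $d^{d-2}-1$. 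Since this hyperplane meets every axis-parallel line of the top layer in exactly one point, each such line keeps at least $d-1\ge1$ of its points, so deleting $R$ changes none of the projections $\pi_i$. Then $T=B\setminus R$ satisfies $\sum_i p_i(T)=d^{d-2}(d^2+d-1)=|T|-1$ and $|T|=d^{d-2}(d^2+d-1)+1$, whence $h(\mathbb N^d)\le d^{d-2}(d^2+d-1)$. (The degenerate case $d=1$, and $d=2$ where $R=\emptyset$ and $B$ is the $2\times3$ box, are checked directly.)

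The main obstacle in both directions is pinning down the exact constants rather than their common $d^{(1+o(1))d}$ order of magnitude. On the lower side, Loomis--Whitney is tight only up to the $d^{d}$ scale, so the single small case $d=2$—where $(d+1)!$ genuinely exceeds $d^d$—must be argued separately from the exact quadratic. On the upper side, the crux is realizing that the near-extremal configuration is \emph{not} a box (boxes are already suboptimal at $d=3$) but a box with one elongated direction from which a full ``diagonal'' slice of the outermost layer is carved away, so that the deficiency is reduced to exactly $1$ while every shadow is left intact.
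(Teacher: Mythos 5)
Your proposal is correct, and your lower bound takes a genuinely different route from the paper's. The paper proves $(d+1)!-1 \le h(\mathbb{N}^d)$ by establishing the recursion $h(\mathbb{N}^d)\ge (d+1)h(\mathbb{N}^{d-1})+d$ with an explicit, self-contained covering construction: it classifies points by ``types'' according to abundant hyperplanes, covers each non-abundant slab of at most $h(\mathbb{N}^{d-1})$ points by induction, and covers the leftover type-$0$ points with a modular diagonal rule. You instead recast coverability as a bipartite matching between points and axis-parallel lines, so that by Hall's theorem the least non-coverable set is exactly the least \emph{deficient} set (one with $\sum_i p_i(T)\le |T|-1$), and then bound deficient sets below via discrete Loomis--Whitney combined with AM--GM; this chain $d^d n^{d-1}\le (n-1)^d$ is valid, and your small-case checks at $d=1,2$ are right. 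Notably, your argument is quantitatively \emph{stronger} than the paper's for $d\ge 3$: it yields $h(\mathbb{N}^d)\ge d^d$, which exceeds $(d+1)!-1$ (e.g., $27$ versus $23$ at $d=3$), so you have in fact improved the lemma's lower bound, at the cost of importing Loomis--Whitney (which you should prove or cite, e.g., via Han's/Shearer's inequality) and of stating without proof the easy fact $d^d\ge (d+1)!$ for $d\ge 3$. For the upper bound, your set works, but your framing overcomplicates what the paper does: since the total number of axis-parallel lines meeting the box $[d]^{d-1}\times[d+1]$ is $d^{d-2}(d^2+d-1)$, \emph{any} subset of the box of size $d^{d-2}(d^2+d-1)+1$ is non-coverable by pure counting; your punctured-diagonal carving, which preserves every projection so as to make the deficiency exactly $1$, produces one such subset but is unnecessary --- the ``crux'' you identify (that the extremal configuration cannot be a full box) plays no role in obtaining the bound.
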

\begin{proof}
  The upper bound follows from the observation that any set 
  \[
  S \subset [d]^{d-1}\times[d+1].
  \]
  of size $|S|=d^{d-2}(d^2+d-1)+1 < d^{d-1}(d+1)$ is not coverable. Indeed, at most $d^{d-2}(d+1)$ points can be picked along lines in any of the first $d-1$ axis directions, and at most $d^{d-1}$ points can be picked in the last axis direction for a total of at most $(d-1)d^{d-2}(d+1)+ d^{d-1} = |S|-1$ points covered, so at least one point of $S$ remains uncovered.
  
  For the lower bound, as $h(\mathbb{N})=1$, it suffices to prove 
  \[
  h(\mathbb{N}^d)\ge (d+1)h(\mathbb{N}^{d-1}) + d.
  \]
  Let $S\subset\mathbb{N}^d$ have size $(d+1)h(\mathbb{N}^{d-1}) + d$ and define $P_{i,y}\coloneqq \{x\in \mathbb{N}^d: x_i=y\}$ to be a hyperplane orthogonal to the $x_i$-axis.
  Say $y\in\mathbb{N}$ is \emph{$i$-abundant} if $|S\cap P_{i,y}| \ge h(\mathbb{N}^{d-1}) + 1$ (vacuously, no value is 0-abundant), and that $x\in \mathbb{N}^d$ is \emph{type-$i$} if $i$ is the smallest index such that $x_i$ is not $i$-abundant, and say $x$ is type-0 if it is not type-$i$ for any $i\in[d]$.
  By definition, for any $x$, there is exactly one $i\ge 0$ such that $x$ is type-$i$.
  By the pigeonhole principle, for any $i$, at most $d$ integers are $i$-abundant.
  Without loss of generality, these integers are $1,2,\dots,d_i\le d$.
  
  Let $L_{0,0}$ denote the set of type-0 points.
  Let $L_{i,y}\subseteq P_{i, y}$ denote the set of type-$i$ points $x$ such that $x_i=y$.
  Clearly the sets $L_{i,y}$ partition $\mathbb{N}^d$.
  For any $i,y$ with $L_{i,y}$ defined, let $\mathcal{L}_{i,y}$ denote the set of lines through a point in $L_{i,y}$ parallel to some $j$-axis with $j\neq i$.
  One can check that the $\mathcal{L}_{i,y}$ are pairwise disjoint: for any $i<i'$ and $y$ and $y'$, a line in both $\mathcal{L}_{i,y}$ and $\mathcal{L}_{i',y'}$ must be parallel to some $j$ axis for $j\neq i$ and $j\neq i'$ and furthermore must pass through points $x\in L_{i,y}$ and $x' \in L_{i',y'}$.
  Then $x_{i'}$ is not $i'$-abundant, so $x$ is not type-0, contradicting the fact that $x\in L_{i,y}$ if $i=0$.
  Also, $x'_i$ is not $i$-abundant, so if $i\neq 0$, then $x'$ is not type-$i'$ as $0<i<i'$, contradicting the fact that $x'\in L_{i',y'}$. 
  We conclude that the $\mathcal{L}_{i,y}$ are pairwise disjoint.
  For all $i\in[d]$ and $y$ not $i$-abundant, we have $|S\cap L_{i,y}|\le |S\cap P_{i,y}| \le h(\mathbb{N}^{d-1})$.
  By definition of $h(\cdot)$ it is possible to pick a point on each line in $\mathcal{L}_{i,y}$ to cover $S\cap L_{i,y}$.
  For each line in $\mathcal{L}_{0,0}$ in the $i$th direction, we pick the unique point $x\in[d]^d$ on the line such that $\sum_{j=0}^{d} x_{j} \equiv i\mod d$.
  This picks at most 1 point on each line and covers all elements of $S$, completing the proof.
\end{proof}

\vspace{3mm}
\noindent {\bf Acknowledgments.} We are grateful to Yuval Wigderson and Jacob Fox for stimulating conversations and helpful comments on this manuscript.

\bibliographystyle{alpha}

\end{document}